\documentclass{amsart}

\usepackage{mathrsfs}

\usepackage[latin1]{inputenc}
\usepackage{amsfonts}
\usepackage{amsmath}
\usepackage{amsthm}
\usepackage{amssymb}
\usepackage{latexsym}
\usepackage{enumerate}

\newtheorem{theorem}{Theorem}%[section]
\newtheorem{lemma}[theorem]{Lemma}

\newtheorem{definition}[theorem]{Definition}

\numberwithin{equation}{section}

\begin{document}

\newcommand{\cc}{\mathfrak{c}}
\newcommand{\N}{\mathbb{N}}
\newcommand{\C}{\mathbb{C}}
\newcommand{\Q}{\mathbb{Q}}
\newcommand{\R}{\mathbb{R}}
\newcommand{\T}{\mathbb{T}}
\newcommand{\seqn}{\{0,1\}^n}
\newcommand{\seqm}{\{0,1\}^m}
\newcommand{\In}{I_n}
\newcommand{\im}{I_m}
\newcommand{\st}{*}
\newcommand{\PP}{\mathbb{P}}
\newcommand{\lin}{\left\langle}
\newcommand{\rin}{\right\rangle}
\newcommand{\SSS}{\mathbb{S}}
\newcommand{\forces}{\Vdash}
\newcommand{\dom}{\text{dom}}
\newcommand{\osc}{\text{osc}}
\newcommand{\F}{\mathcal{F}}
\newcommand{\A}{\mathcal{A}}
\newcommand{\K}{\mathcal{K}}
\newcommand{\B}{\mathcal{B}}
\newcommand{\I}{\mathcal{I}}
\newcommand{\CC}{\mathcal{C}}

\author{Piotr Koszmider}
\address{Institute of Mathematics, Polish Academy of Sciences,
ul. \'Sniadeckich 8,  00-656 Warszawa, Poland}
\email{\texttt{piotr.koszmider@impan.pl}}

\subjclass[2010]{46L30, 47L30, 03E05}
\title{A non-diagonalizable pure state}

\maketitle

\begin{abstract} We construct a pure state  on the C*-algebra
$\B(\ell_2)$ of all bounded linear operators on $\ell_2$ 
which is not diagonalizable, i.e.,
it is not of the form $\lim_u\langle T(e_k), e_k\rangle$ for any
orthonormal basis $(e_k)_{k\in \N}$ of $\ell_2$ and an ultrafilter $u$ on $\N$.
This constitutes a counterexample to Anderson's conjecture without additional hypothesis and
 improves  results of C. Akemann, N. Weaver, I. Farah and I. Smythe
  who constructed such states making additional set-theoretic assumptions.
  
  It follows from results of J. Anderson and the positive solution to the Kadison-Singer problem
due to A. Marcus, D. Spielman, N. Srivastava that 
the restriction of our pure state to any atomic masa $D((e_k)_{k\in \N})$ of
diagonal operators with  respect to an orthonormal basis $(e_k)_{k\in \N}$ is not multiplicative
on $D((e_k)_{k\in \N})$. 
\end{abstract}

\section{Introduction}

Recall that a pure state on a C*-algebra is a positive linear functional of norm one, i.e., a state,
 which is not a convex combination of other states. Pure states on
the algebras of all operators on
finite dimensional Hilbert spaces $\ell_2(n)$ for $n\in \N$ are known all to
be vector states, i.e., of the form $\phi(T)=\langle T(v), v\rangle$, where $v\in \ell_2(n)$
is a unit vector. Vector states are also pure states in the  case 
of the algebra $\B(\ell_2)$ of all linear bounded operators on an infinite dimensional
Hilbert space $\ell_2$. 

There are many other pure states on $\B(\ell_2)$ whose existence
is usually proved by means of the Hahn-Banach theorem starting from a pure state
on a maximal abelian self-adjoint subalgebra  (masa) of $\B(\ell_2)$.
If the masa is atomic, that is of the form $D((e_k)_{k\in \N})$ of
all diagonal operators with respect to an orthonormal basis $(e_k)_{k\in \N}$,
then the general form of the initial pure state $\phi$ for $T\in D((e_k)_{k\in \N})$ is 
$$\lim_u\langle T(e_k), e_k\rangle \leqno (\rm D)$$
 where $u$ is an 
ultrafilter\footnote{$\lim_{ u}z_k=z$
for $z_k, z\in \C$ means that for every $\varepsilon>0$ the set $\{k\in \N: |z_k-z|<\varepsilon\}$
is in the ultrafilter $u$. } on $\N$. 
J. Anderson showed in \cite{anderson-extreme} that
(D) defines a pure state on the entire $\B(\ell_2)$ and  conjectured
in what became known as Anderson's conjecture 
(\cite{anderson}) that every pure state on $\B(\ell_2)$ is of the above form
for some orthonormal basis $(e_k)_{k\in \N}$ of $\ell_2$ and an ultrafilter 
$u$ on $\N$.  Our main result (Theorem \ref{main-theorem}) is the construction
of a pure state that is non-diagonalizable, that is a counterexample to Anderson's conjecture.

Much of the research concerning the relations between pure states on $\B(\ell_2)$ and
pure states on masas of $\B(\ell_2)$ has been motivated by a seminal paper 
\cite{ks} of Kadison and Singer. The positive solution of one of the problems stated in the paper and known
as the Kadison-Singer problem  due to A. Marcus, D. Spielman, N. Srivastava 
implies that a non-diagonalizable pure state on $\B(\ell_2)$ necessarily cannot have
multiplicative restriction to any atomic masa (because such restrictions extend to 
pure states on $\B(\ell_2)$ of the form (D) but the extensions are unique by the
positive solution to the Kadison-Singer problem).

Another problem from the paper \cite{ks}  is whether any pure state on $\B(\ell_2)$
has a multiplicative restriction to some masa of $\B(\ell_2)$. Having
multiplicative restriction in this case is equivalent to 
having the restriction equal to a pure state on the masa.
In \cite{ch-masas}
C. Akemann and N. Weaver provided a negative solution to this problem assuming 
the continuum hypothesis {\sf CH}.
This, in particular, already showed that Anderson's conjecture is consistently false,
but as suggested in \cite{ch-masas} it could still be consistent that 
any pure state on $\B(\ell_2)$ has a multiplicative restriction to a masa.
This additional hypothesis in the case
of Anderson's conjecture was weakened  to {\sf MA}
(\cite{weaver-book}) or  to ${\mathsf{cov}}(\mathcal M)=\mathfrak c$
or to $ \mathfrak d\leq \mathfrak p^*$ (12.5 of \cite{farah-book}), or to
another one in \cite{smythe}.

Our counterexample to Anderson's   conjecture  shows 
that the additional hypothesis in the result of Akemann and Weaver
is not needed when we limit ourself to atomic masas.  However, we do not know if
our non-diagonalizable pure state can have a multiplicative restriction to a non-atomic masa.
An improvement of a result from \cite{ks} due to J. Anderson from \cite{anderson-extensions} says that any
pure state on a non-atomic masa has many extensions to pure states on $\B(\ell_2)$.
So we can say that our pure state is not ``determined" by any pure restriction to any masa, i.e.,
either the restriction is not pure or if it is pure it does not uniquely extends to
our pure state.

Our construction is entirely different than that
of Akemann and Weaver which used properties of separable C*-subalgebras of $\B(\ell_2)$ and a well-ordering of all masas in the first uncountable
type $\omega_1$ based on the continuum hypothesis to approximate
the desired pure state with separable fragments.
  Let us describe the main idea of our construction here. In a sense,
  instead of using separable approximations we obtain the desired pure state by
  approximating it with finite dimensional fragments. Let
$\{0,1\}^m$ denote the set of all sequences of zeros and ones of length $m\in \N$ and
let $\{0,1\}^\N$ denote the set of all infinite sequences of zeros and ones.
We fix a function $d:\N\rightarrow \N$  which will be specified later and identify
 $\ell_2$ with
$$\bigoplus_{m\in \N}\bigotimes_{\sigma\in \{0,1\}^m}\ell_2(d(m)).\leqno (\rm I_1)$$
This can be done by considering a partition of $\N$ into finite sets of sizes 
$d(m)^{(2^m)}$ for $m\in \N$.
Recall that there is a canonical isomorphism
$$\B\big(\bigotimes_{\sigma\in \{0,1\}^m}\ell_2(d(m))\big)\equiv
\bigotimes_{\sigma\in \{0,1\}^m}\B(\ell_2(d(m))).\leqno ({\rm I_2})$$
If for every $\sigma\in \{0,1\}^m$ we choose any non-zero projection $P_\sigma\in \B(\ell_2(d(m)))$
and $I$ denotes the identity, then
$$\prod_{\sigma\in \{0,1\}^m} (I\otimes ... \otimes \overset{\sigma}P_\sigma\otimes ... \otimes I)
=\bigotimes_{\sigma\in \{0,1\}^m}P_\sigma.\leqno (\rm P)$$
is a projection that is dominated by any of the projections 
$(I\otimes ... \otimes \overset{\sigma}P_\sigma\otimes ... \otimes I)$.
So for any $\alpha\in \{0,1\}^\N$ and any choice 
$v^\alpha=(v_m^\alpha)_{m\in \N}\in \prod_{m\in\N}(\ell_2(d(m))\setminus\{0\})$
  we can define
 rank one projections $R_{v_m^\alpha}:\ell_2(d(m))\rightarrow \ell_2(d(m))$ 
onto the direction of $v_m^\alpha$ and distribute them along 
the restrictions $\alpha|m=\alpha|\{0, ..., d(m)-1\}\in\{0,1\}^m$ for $m\in\N$ defining
$$P_{\alpha, v^\alpha}=\bigoplus_{m\in \N} \big(I\otimes ... 
\otimes \overset{\alpha|m}{R_{v_m^\alpha}}\otimes ... \otimes I\big).$$
 Under the identifications (${\rm I_1}$) and (${\rm I_2}$)  
 the operator $P_{\alpha, v^\alpha}$  is a projection in $\B(\ell_2)$. It follows
 from (P)  that
for any choices $v^\alpha$ for  $\alpha\in \{0,1\}^\N$
 any finite product formed by the projections $(P_{\alpha, v_\alpha}: \alpha\in \{0,1\}^\N)$  
 dominates a nonzero projection 
 because eventually $\alpha_1|m, ...,  \alpha_n|m\in \{0,1\}^m$ are all distinct
 if $\alpha_1, ..., \alpha_n\in\{0,1\}^\N$ are distinct. This guarantees that for
 any choices $v^\alpha$ for $\alpha\in \{0,1\}^\N$
there is a pure state $\phi$ on $\B(\ell_2)$ such that $\phi(P_{\alpha, v^\alpha})=1$
for all $\alpha\in \{0,1\}^\N$.

To make sure that $\phi$ is not diagonalized 
by any orthonormal basis we need to show that there is a constant $0<c<1$ such that 
for every orthonormal basis
$(e_k)_{k\in \N}$ of $\ell_2$ there is $\alpha\in \{0,1\}^\N$ such that 
$$|\langle P_{\alpha, v^\alpha}(e_k), e_k\rangle|<c\leqno ({\rm ND})$$
for every $k\in \N$.  
To obtain the above property we manipulate the choice
of $v^\alpha$. Here we exploit the fact that 
if $f(d)$  points on $d$-dimensional real sphere form an $\varepsilon$-net on the sphere
for $\varepsilon<1$,
then $f(d)$ must grow exponentially in the dimension $d\in \N$. 
Using this with the choice of $d$ satisfying for each $m\in \N$
(i) $d(m)\geq 2^7$ , (ii)
$32m^2(d(m)^{(2^m)})^2d(m)^{(2^m-1)}<(100/91)^{d(m)}$ we can obtain $v^\alpha$
satisfying (ND) for $c=19/20$ and a fixed orthonormal basis
$(e_k)_{k\in \N}$ of $\ell_2$. As there are as many orthogonal bases in $\ell_2$ as
elements  $\alpha\in\{0,1\}^\N$, we can make sure that $\phi$ is not diagonalized by any basis.

The structure of the paper is as follows. In the second section we 
discuss the preliminaries including the above mentioned tensor products
of finite dimensional Hilbert spaces and the exponential growth of the above mentioned
function. In the third section we construct the required family of projections
(Theorem \ref{family}) and include the final argument.
The main result is Theorem \ref{main-theorem}. The last section contains additional remarks.

The notation should be standard. When $X$ is a set, then $\ell_2(X)$ denotes
the Hilbert space whose orthonormal basis is labeled by elements of $X$.
All norms are $\ell_2$-norms or operator norms on Hilbert spaces.
$|X|$ denotes the cardinality of a set and $|z|$ denotes the absolute value of a
complex number $z$,  it should be always clear
from the context which meaning of $|\ |$ is used. We also often identify
$n\in\N$ with the set $\{0, ..., n-1\}$. For sets $A, B$ by $B^A$ we mean
the set of all functions from $A$ into $B$. The restriction $\sigma=x|m$
of an infinite sequence $x\in\{0,1\}^\N$ for $m\in \N$ is 
a sequence $\sigma\in \{0,1\}^m$ of length $m$ such that $\sigma(k)=x(k)$
for all $k<m$.

The author would like to thank  
 C. Akemann, I. Farah, P. Wojtaszczyk  for valuable comments which were
 used to improve the previous version of the paper.

\section{Preliminaries}

\subsection{Projections and  the inner product}

\begin{lemma}\label{equality}Suppose that $P$ is an orthogonal projection in $\B(\ell_2)$
and $x\in\ell_2$. Then
$$\langle P(x), x\rangle=\|P(x)\|^2.$$
\end{lemma}
\begin{proof} Using the facts that $P=P^2=P^*$ we obtain
 $\langle P(x), x\rangle=
\langle P^2(x), x\rangle=
\langle P(x), P(x)\rangle=\| P(x)\|^2$.
\end{proof}

\begin{lemma}\label{big-projections} Suppose that $(e_k)_{k\in \N}$ is an orthonormal basis of $\ell_2$
and $F\subseteq \ell_2$ is an $n$-dimensional linear subspace of $\ell_2$. 
Let $\varepsilon >0$. There is $X\subseteq \N$ of cardinality not bigger than $n^2/\varepsilon$
such that $\|P_F(e_k)\|^2<\varepsilon$ for every $k\in \N\setminus X$.
\end{lemma}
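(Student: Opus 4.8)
The plan is to bound the "mass" that the projection $P_F$ puts on basis vectors using the trace. Since $F$ is $n$-dimensional, $P_F$ is a finite-rank projection with $\operatorname{tr}(P_F) = n$. In terms of the orthonormal basis $(e_k)_{k\in\N}$ we have $\operatorname{tr}(P_F) = \sum_{k\in\N}\langle P_F(e_k),e_k\rangle$, and by Lemma \ref{equality} each summand equals $\|P_F(e_k)\|^2 \geq 0$. Hence $\sum_{k\in\N}\|P_F(e_k)\|^2 = n$.

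Now set $X = \{k\in\N : \|P_F(e_k)\|^2 \geq \varepsilon\}$. For $k\in\N\setminus X$ we have $\|P_F(e_k)\|^2 < \varepsilon$ by definition, so it only remains to bound $|X|$. Since the terms $\|P_F(e_k)\|^2$ are nonnegative and sum to $n$, and each term with index in $X$ is at least $\varepsilon$, we get $\varepsilon\cdot|X| \leq \sum_{k\in X}\|P_F(e_k)\|^2 \leq n$, whence $|X| \leq n/\varepsilon$. This actually gives the slightly stronger bound $n/\varepsilon$ rather than $n^2/\varepsilon$, so the claimed bound follows a fortiori (one may also simply use the crude estimate $n \leq n^2$). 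One small point to address is that $\operatorname{tr}(P_F)$ is basis-independent and equals $n$: this is standard, but can be seen by computing the trace in an orthonormal basis adapted to $F$, where $P_F$ is diagonal with $n$ ones and the rest zeros.

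There is essentially no obstacle here; the only thing to be slightly careful about is the convergence/rearrangement issue when passing between $\operatorname{tr}(P_F)$ computed in a $F$-adapted basis and computed in $(e_k)_{k\in\N}$ — but this is justified because all terms are nonnegative, so the sum is unconditionally convergent (to $n$) regardless of the basis, and in particular finite. Given that, the counting argument for $|X|$ is immediate.
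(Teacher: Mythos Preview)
Your proof is correct and in fact yields the sharper bound $|X|\leq n/\varepsilon$. The paper's argument is slightly different: it fixes an orthonormal basis $\{e_0',\dots,e_{n-1}'\}$ of $F$, uses Parseval for each $e_j'$ separately to find sets $A_j$ of size at most $n/\varepsilon$ on which $|\langle e_j',e_k\rangle|^2>\varepsilon/n$, and then takes $X=\bigcup_{j<n}A_j$, which is why the factor $n^2$ appears. Your approach instead sums these Parseval identities first (this is exactly the trace computation $\sum_k\|P_F(e_k)\|^2=\sum_j\sum_k|\langle e_j',e_k\rangle|^2=n$) and then applies a single Markov-type counting step, avoiding the lossy union bound. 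Both arguments are elementary and rest on the same underlying identity; yours is a bit cleaner and gives the better constant, while the paper's version makes the dependence on the individual basis vectors of $F$ more explicit. Either bound suffices for the application in Lemma~\ref{main-lemma}.
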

\begin{proof} Let $\{e_0', ..., e_{n-1}'\}$ be an orthonormal basis of $F$.
We have $1=\|e_j'\|^2=\Sigma_{k\in \N}|\langle e_j', e_k\rangle|^2$ for each $j<n$.
So there are $A_j\subseteq \N$ of cardinality not bigger than  $n/\varepsilon$ such that
$|\langle e_j', e_k\rangle|^2\leq \varepsilon/n$ for every $k\in \N\setminus A_j$.
Let $X=\bigcup_{j<n} A_j$. Then $|X|\leq n^2/\varepsilon$ and for $k\in \N\setminus X$ we have 
$$\|P_F(e_k)\|^2=\langle \Sigma_{j<n}\langle e_k, e_j'\rangle e_j',
 \Sigma_{j<n}\langle e_k, e_j'\rangle e_j'\rangle=
\Sigma_{j<n}\langle \langle e_j', e_k \rangle e_j',  \langle e_j', e_k \rangle e_j'\rangle=$$
$$\Sigma_{j<n}|\langle e_j', e_k \rangle|^2 \leq n\varepsilon/n=\varepsilon.$$
\end{proof}

\subsection{Obtaining an inclined vector}

The purpose of this subsection is to prove Lemma \ref{inclined-vector} which roughly says that
there is an absolute constant  such that if in $d$-dimensional
Hilbert space we have less then ``exponentially in $d$"-many  directions, then
there is another direction whose inclination
to all the original ones is at least the constant. 

\begin{lemma}\label{net} Suppose that $X$ is a collection of unit vectors
in $\R^d$
for $d\geq 2^7$,
such that for every unit vector $y\in \R^d$ there is
$x\in X$ with $\|x-y\|\leq 9/10$. Then the cardinality
of $X$ is at least $(100/91)^d/2$.
\end{lemma}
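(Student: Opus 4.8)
The plan is to prove this by a volume (measure) argument on the sphere $S^{d-1}\subseteq \R^d$, which is the standard way to lower-bound the size of a net. The hypothesis says that the $(9/10)$-balls around the points of $X$ cover $S^{d-1}$, so $|X|$ times the maximal spherical measure of a single such ball is at least the total measure of the sphere; rearranging gives a lower bound on $|X|$, and the point is to show this bound is at least $(100/91)^d/2$.

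First I would set up the normalized rotation-invariant probability measure $\mu$ on $S^{d-1}$. For a unit vector $x\in\R^d$, the set $C(x)=\{y\in S^{d-1}:\|x-y\|\le 9/10\}$ is a spherical cap; since $\|x-y\|^2 = 2-2\langle x,y\rangle$, the condition $\|x-y\|\le 9/10$ is equivalent to $\langle x,y\rangle \ge 1 - (81/200) = 119/200$. So each $C(x)$ is a cap of fixed angular radius $\theta_0$ with $\cos\theta_0 = 119/200$. By rotation invariance all these caps have the same measure $\mu(C(x)) = \kappa_d$, and the covering hypothesis gives $|X|\cdot\kappa_d \ge \mu(S^{d-1}) = 1$, i.e. $|X|\ge 1/\kappa_d$. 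It therefore suffices to show $\kappa_d \le 2\,(91/100)^d$, i.e. that the measure of a spherical cap of height $h = 1 - 119/200 = 81/200$ decays at least geometrically with ratio $91/100$.

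Next I would bound the cap measure. The measure of the cap $\{y\in S^{d-1}:\langle x,y\rangle\ge t\}$ for $t = 119/200$ can be estimated by the elementary inequality $\kappa_d \le (\,\text{something}\,)^d$. The cleanest route: project the cap to the hyperplane orthogonal to $x$; a point $y$ in the cap has its $x$-component at least $t$, so the remaining $(d-1)$ coordinates have norm at most $\sqrt{1-t^2}$, and the cap injects into a ball of radius $r=\sqrt{1-t^2}$ in $\R^{d-1}$ times an interval. Comparing surface measures, one gets $\kappa_d \le C\cdot r^{d-1}$ for a mild constant $C$ (coming from comparing the band $[t,1]$ in the $x$-direction with the whole sphere, and a factor for the Jacobian of the projection, both at most polynomial or absorbed). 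Here $r^2 = 1 - (119/200)^2 = (200^2-119^2)/200^2 = (40000-14161)/40000 = 25839/40000$, so $r = \sqrt{25839}/200 < 161/200 < 91/100$ — actually one checks $\sqrt{25839}\approx 160.7$, so $r < 0.804$, comfortably below $91/100 = 0.91$. The slack between $0.804$ and $0.91$ is exactly what absorbs the polynomial-in-$d$ constant $C$ and the leftover factor of $2$, using the hypothesis $d\ge 2^7$ to guarantee $C \le (0.91/0.804)^{d}/2$ for all such $d$ since $(0.91/0.804)^{128}$ is astronomically large.

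The main obstacle is getting the constants to line up cleanly: one must pick the cap-measure estimate carefully enough that the exponential base is genuinely below $91/100$ (so there is room to spare) while keeping the multiplicative error a function of $d$ that is dominated by the exponential slack once $d\ge 2^7$. I expect the cleanest presentation uses the comparison $\kappa_d \le \tfrac{1}{2}\,(\sqrt{1-t^2}\,)^{\,d-1}\big/\big(\text{ratio of a reference cap}\big)$ or simply the crude bound that the cap lies in a cylinder, then verifies numerically that $d=2^7$ already forces $1/\kappa_d \ge (100/91)^d/2$, after which monotonicity (the gap widens with $d$) finishes the induction. Everything else is routine estimation of binomial-type or $\Gamma$-function expressions, or can be bypassed with the elementary projection argument above.
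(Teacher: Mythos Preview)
Your approach via surface measure on $S^{d-1}$ is sound in outline and would go through once the cap-measure constant is pinned down, but it is not the route the paper takes, and as written it leaves the key inequality $\kappa_d\le 2(91/100)^d$ only heuristically justified. The paper's argument is more elementary: instead of estimating spherical cap measure (which drags in Beta integrals or the Jacobian of the radial projection you allude to), it thickens the sphere to the solid shell $\{y\in\R^d: 99/100\le\|y\|\le 1\}$ and uses only $d$-dimensional Lebesgue volume together with the scaling $V_d(r)=r^dV_d(1)$. The observation is that if the balls $B_{9/10}(x)$ for $x\in X$ cover $S^{d-1}$, then the slightly enlarged balls $B_{91/100}(x)$ cover the whole shell, since any $y$ in the shell is within $1/100$ of $y/\|y\|$; comparing volumes gives $|X|(91/100)^dV_d(1)\ge (1-(99/100)^d)V_d(1)$, and $(99/100)^d\le 1/2$ for $d\ge 2^7$ yields $|X|\ge(100/91)^d/2$ at once. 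Your route would buy a sharper exponential base ($\sqrt{1-t^2}\approx 0.804$ versus $0.91$) at the price of an explicit estimate on the cap-to-sphere area ratio; the paper trades that slack away for a proof that needs nothing beyond the scaling of ball volumes and a single numerical check.
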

\begin{proof} Let $B_r(a)$ denote the ball of radius $r>0$ with the center $a\in \R^d$.
Let $V_d(r)$ denote the $d$-dimensional volume of $B_r(a)$ for  any $a\in \R^d$.
Recall that  $V_d(r)$ is equal to $r^dV_d(1)$ which follows from
the formula for integration by substitution with the substitution
sending $a\in \R^d$ to $ra$.

The hypothesis on $X$ implies that  the sphere in $\R^d$ is covered by
$\bigcup\{B_{9/10}(x): x\in X\}$. So whenever $99/100\leq \|y\|\leq 1$ for $y\in \R^d$, then
there is $x\in X$ such that $d(x, y)\leq d(x, y/\|y\|)+1/100\leq 9/10+1/100=91/100$
 and so set
$\bigcup\{B_{91/100}(x): x\in X\}$ covers 
$B_1(0^d)\setminus B_{99/10}(0^d)$, where $0^d$ denotes the origin in $\R^d$.  

The latter set has volume
$V_d(1)-V_d(99/100)=(1-(99/100)^d)V_d(1)$ and the union which covers it
has volume not bigger than $|X|(91/100)^dV_d(1)$.
It follows that $(1-(99/100)^d)V_d(1)\leq |X|(91/100)^dV_d(1)$ and so
$(100/91)^d({1-(99/100)^d})\leq |X|$. As $(99/100)^{(2^7)}\approx0,276251668$
we have that $(99/100)^d\leq1/2$ for $d\geq 2^7$ and so
$|X|\geq (100/91)^d/2$ for such $d$s, as required.
\end{proof}

The above  argument is a version of well known fact concerning $\varepsilon$-nets 
of the $n$-dimensional ball,
e.g. Proposition 15.1.3 of \cite{approx}.

\begin{lemma}\label{product-distance} Suppose that $d\in \N\setminus\{0\}$
and $x, y\in \C^d$ are unit vectors.  
Suppose that $\varepsilon>0$ and 
$\|x\pm y\|, \|x\pm iy\|\geq\varepsilon$.
Then $$|\langle x, y\rangle|\leq \sqrt2(1-\varepsilon^2/2).$$
\end{lemma}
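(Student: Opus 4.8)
The plan is to bound $|\langle x,y\rangle|$ by extracting as much information as possible from the four lower bounds $\|x\pm y\|,\|x\pm iy\|\ge\varepsilon$ via the polarization-type identity $\|x-\lambda y\|^2 = \|x\|^2 + |\lambda|^2\|y\|^2 - 2\,\mathrm{Re}(\bar\lambda\langle x,y\rangle) = 2 - 2\,\mathrm{Re}(\bar\lambda\langle x,y\rangle)$ for $\lambda\in\{1,-1,i,-i\}$, using that $x,y$ are unit vectors. Writing $\langle x,y\rangle = a+bi$ with $a,b\in\R$, these four identities become $\|x-y\|^2 = 2-2a$, $\|x+y\|^2 = 2+2a$, $\|x-iy\|^2 = 2+2b$, $\|x+iy\|^2 = 2-2b$. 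So the hypotheses say $2-2a\ge\varepsilon^2$, $2+2a\ge\varepsilon^2$, $2+2b\ge\varepsilon^2$, $2-2b\ge\varepsilon^2$, i.e. $|a|\le 1-\varepsilon^2/2$ and $|b|\le 1-\varepsilon^2/2$.

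From here the conclusion is immediate: $|\langle x,y\rangle|^2 = a^2+b^2 \le 2(1-\varepsilon^2/2)^2$, hence $|\langle x,y\rangle| \le \sqrt2\,(1-\varepsilon^2/2)$, which is exactly the claimed bound. First I would record the polarization identity and note that $\|x\|=\|y\|=1$ collapses it to the clean form above; then I would read off the four real inequalities; then I would combine the real and imaginary parts. No step is really an obstacle here — the only thing to be careful about is the sign/conjugation bookkeeping in the inner product convention (linear in the first or second slot), so that the $i$ and $-i$ cases land on $\pm b$ correctly; with the convention used elsewhere in the paper one checks $\mathrm{Re}(\overline{i}\langle x,y\rangle) = \mathrm{Re}(-i(a+bi)) = b$, giving $\|x-iy\|^2 = 2-2b$ and $\|x+iy\|^2=2+2b$, which only swaps which inequality bounds $b$ from above versus below and does not affect the final symmetric conclusion. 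I would also remark that the constant $\sqrt2$ is what forces later use of a value of $\varepsilon$ close to $1$, since $\sqrt2(1-\varepsilon^2/2)<1$ requires $\varepsilon$ bounded away from $0$ by a definite amount (roughly $\varepsilon^2 > 2-\sqrt2$), matching the numbers $9/10$, $91/100$ appearing in Lemma~\ref{net}.
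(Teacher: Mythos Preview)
Your proof is correct and is in fact cleaner than the paper's. Both proofs rest on the same polarization identity, which for unit vectors gives $\langle x,y\rangle=(1-\|x-y\|^2/2)+i(1-\|x-iy\|^2/2)$; equivalently, writing $\langle x,y\rangle=a+bi$, one has $\|x\mp y\|^2=2\mp 2a$ and $\|x\mp iy\|^2=2\mp 2b$. You then use \emph{all four} hypotheses to bound $|a|$ and $|b|$ separately by $1-\varepsilon^2/2$, and conclude.

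The paper instead expresses $|\langle x,y\rangle|^2=(1-\|x-\alpha y\|^2/2)^2+(1-\|x-i\alpha y\|^2/2)^2$ for an auxiliary $\alpha\in\{1,-1,i,-i\}$ and bounds each squared term by $(1-\varepsilon^2/2)^2$ via monotonicity of $(1-\beta)^2$ on $[\varepsilon^2/2,1]$. For this it must first arrange that $\|x-\alpha y\|^2/2,\|x-i\alpha y\|^2/2\le 1$, which requires an additional geometric step (rotating $y$ to a coordinate vector and a small case analysis on $x_1\in\C$). Your argument sidesteps this entirely: by using the $+$ and $-$ inequalities in tandem you control the sign of $a$ and $b$ directly, so no choice of $\alpha$ and no auxiliary $\sqrt2$ bound is needed. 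The paper's route makes the role of the polarization identity more visible, while yours is shorter and avoids any coordinate argument.
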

\begin{proof} Let $\alpha\in \{1, -1, i, -i\}$.
By the parallelogram law $2(\|v\|^2+\|w\|^2)=\|v+w\|^2+\|v-w\|^2$
we conclude that $\|x+\alpha y\|^2=4-\|x-\alpha y\|^2$ and
$\|x+i\alpha y\|^2=4-\|x-i\alpha y\|^2$. Using the above and the
polarization identity 
$\langle u, v \rangle={1\over 4}(\|u+v\|^2-\|u-v\|^2- i\|u-iv\|^2+i\|u+iv\|^2)$ we obtain

$$|\langle x, y\rangle|=|\langle x, \alpha y\rangle|=
\sqrt{(1-\|x-\alpha y\|^2/2)^2+(1-\|x-i\alpha y\|^2/2)^2}\leq \sqrt2(1-\varepsilon^2/2)$$
for $\varepsilon^2/2, \|x-\alpha y\|^2/2, \|x-i\alpha y\|^2/2\leq 1$ 
since the real variable function $(1-\beta)^2$ is decreasing
below $\beta=1$ and we have $\varepsilon^2/2\leq \|x-\alpha y\|^2/2, \|x- i\alpha y\|^2/2$.
 The rest of the proof consist of noting that
under our hypothesis that $\|x\|=\|y\|=1$ there is $\alpha \in \{1, -1, i, -i\}$
such that $\|x-\alpha y\|, \|x-i\alpha y\|\leq \sqrt 2$.

By rotating the sphere in $\C^d$ we may assume that
 $y=(1, 0,  ..., 0)$. 
For any $x_1\in \C$ such that $|x_1|\leq 1$ there
is $\alpha\in \{1, -1, i, -i\}$  such
that $|x_1-\alpha|, |x_1-i \alpha|\leq 1$.
Now 
$$\|x-\alpha y\|^2=|x_1-\alpha|^2+\sum_{1<k<d}|x_k|^2\leq 2.$$
$$\|x- i \alpha y\|^2=|x_1- i \alpha|^2+\sum_{1<k<d}|x_k|^2\leq 2.$$

\end{proof}

Let us note that considering the points $\pm iy$ in the above lemma is necessary in the
complex case as already for $d=1$ we have $\|i+1\|=\|i-1\|=\sqrt 2$ but $i$ and $1$ are not inclined, i.e., 
$|\langle i, 1\rangle|=1$
as $i$ and $1$ lie on the same ``complex straight line" as they are linearly dependent over $\C$.

\begin{lemma}\label{inclined-vector}
Suppose that $d, n\in \N$ satisfy $d\geq 2^7$ and $n<(100/91)^d/8$
and that $X=\{x^j: j<n\}$ is a collection of  vectors in $\C^d$. Then there
is a unit vector $x\in \C^d$ such that $|\langle x_j, x\rangle|\leq(9/10)\|x_j\|$
for every $j<n$. In particular $\|R_x(x_j)\|^2\leq (9/10)\|x_j\|^2$
for every $j<n$, where $R_x$ is the orthogonal projection onto the direction of $x$.
\end{lemma}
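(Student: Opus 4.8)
The plan is to reduce the complex statement to the real case covered by Lemma~\ref{net}. First I would dispose of the trivial case: if some $x^j=0$ then the inclination condition $|\langle x^j,x\rangle|\le (9/10)\|x^j\|$ holds vacuously for that index, so we may assume every $x^j$ is nonzero, and after normalizing we may assume each $x^j$ is a unit vector. Now I want to choose a unit vector $x$ that is simultaneously ``not almost parallel'' (in the complex sense) to any of the $x^j$. By Lemma~\ref{product-distance}, if $\|x^j\pm x\|,\|x^j\pm ix\|\ge \varepsilon$ for a suitable $\varepsilon$ then $|\langle x^j,x\rangle|\le \sqrt2(1-\varepsilon^2/2)$; choosing $\varepsilon$ so that $\sqrt2(1-\varepsilon^2/2)=9/10$ gives an explicit threshold, namely $\varepsilon^2=2(1-9/(10\sqrt2))$, which is a fixed absolute constant slightly below $1$ (one checks $\varepsilon\le 9/10$, in fact $\varepsilon$ is comfortably bounded away from both $0$ and $1$). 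So it suffices to find a unit vector $x$ whose real-Euclidean distance to each of the $4n$ points $\{\pm x^j,\pm i x^j: j<n\}$ is at least $\varepsilon$.

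The key step is then a counting/covering argument. Identify $\C^d$ with $\R^{2d}$; the unit sphere of $\C^d$ is the unit sphere $S$ of $\R^{2d}$. Suppose toward a contradiction that no such $x$ exists, i.e. every unit vector $y\in S$ lies within distance $\varepsilon<1$ of one of the $4n$ points $\pm x^j,\pm i x^j$. These $4n$ points are themselves unit vectors in $\R^{2d}$, so $X':=\{\pm x^j,\pm ix^j:j<n\}$ would be a set of unit vectors in $\R^{2d}$ that is a $\varepsilon$-net (in particular a $9/10$-net, since $\varepsilon\le 9/10$) for the sphere in dimension $2d$. Lemma~\ref{net}, applied with $d$ replaced by $2d$ (note $2d\ge 2^7$ since $d\ge 2^7$), forces $|X'|\ge (100/91)^{2d}/2$. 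But $|X'|\le 4n< (100/91)^{d}/2\le (100/91)^{2d}/2$, a contradiction. Hence some unit vector $x\in\C^d$ is at distance $\ge\varepsilon$ from all $\pm x^j,\pm ix^j$, and Lemma~\ref{product-distance} gives $|\langle x^j,x\rangle|\le 9/10=\tfrac{9}{10}\|x^j\|$ for every $j<n$; rescaling back to the original (non-normalized) vectors restores the factor $\|x^j\|$ on the right. The final sentence of the statement is then immediate from Lemma~\ref{equality}: $\|R_x(x^j)\|^2=\langle R_x(x^j),x^j\rangle=|\langle x^j,x\rangle|^2\le (9/10)^2\|x^j\|^2\le (9/10)\|x^j\|^2$.

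The main obstacle I anticipate is purely bookkeeping with the constants: one must check that the $\varepsilon$ extracted from Lemma~\ref{product-distance} (to make the bound land on exactly $9/10$) is indeed $\le 9/10$ so that the hypothesis of Lemma~\ref{net} is met, and that the dimension-doubling $d\mapsto 2d$ is harmless both for the threshold $2^7$ and for the exponential count (since $(100/91)^{2d}\ge (100/91)^d$, the bound $n<(100/91)^d/8$ with the factor $4$ from the four sign/phase choices still beats $(100/91)^{2d}/2$ with room to spare). None of this is deep — the conceptual content is entirely in the two preceding lemmas — but the constant $9/10$ and the factor $1/8$ must be threaded through carefully. An alternative that avoids dimension-doubling would be to work directly on the complex sphere, but reusing Lemma~\ref{net} verbatim via the real identification is cleaner.
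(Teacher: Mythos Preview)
Your proposal is correct and follows essentially the same route as the paper: reduce to unit vectors, pass to $\R^{2d}$, consider the $4n$ points $\{\pm x^j,\pm ix^j\}$, invoke Lemma~\ref{net} in dimension $2d$ to find a unit vector far from all of them, and feed the resulting distance bound into Lemma~\ref{product-distance}. The only cosmetic difference is that the paper works directly with $\varepsilon=9/10$ in Lemma~\ref{net} and then verifies numerically that $\sqrt{2}(1-(9/10)^2/2)\le 9/10$, whereas you solve for the $\varepsilon$ making the bound exactly $9/10$ and check $\varepsilon\le 9/10$; these are equivalent since $\varepsilon\mapsto\sqrt{2}(1-\varepsilon^2/2)$ is decreasing.
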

\begin{proof} First assume that all $x_j$s are unit vectors.
Identifying $\R^2$ with $\C$ we 
can consider $Y(l)=\{y^j(l): j<n\}\subseteq \R^{2d}$ for $l\in \{1, 2, 3, 4\}$,
satisfying 
$$x^j_k=y^j_{2k}(1)+iy^j_{2k+1}(1),$$
$$-x^j_k=y^j_{2k}(2)+iy^j_{2k+1}(2),$$
$$ix^j_k=y^j_{2k}(3)+iy^j_{2k+1}(3),$$
$$-ix^j_k=y^j_{2k}(4)+iy^j_{2k+1}(4)$$
for all $k<d$ and $j<n$ and $1\leq l\leq 4$. It is clear that $y_j(l)$ are unit
vectors for all $j<n$ and $1\leq l\leq 4$.

As $|Y(1)\cup Y(2)\cup Y(3)\cup Y(4)|=4n<(100/91)^{d}/2<(100/91)^{2d}/2$ Lemma \ref{net} implies
that there is a unit $z\in \R^{2d}$ such that
$\|z-y^j(l)\|\geq 9/10$ for all $j<n$ and $1\leq l\leq 4$.

Consider $x\in \C^d$ whose coordinates are complex numbers whose
real and imaginary parts are formed from the $2d$ real coordinates of $z$, i.e.,
$${x_k}={z_{2k}}+i{z_{2k+1}}$$
for any $k<d$. It is clear that $x$ is a unit vector. We have
$$\|x-{x^j}\|=\sqrt{\sum_{k<d}  |x_k-x^j_k|^2  }=$$
$$\sqrt{\sum_{k<d}  |{z_{2k}}+i{z_{2k+1}}-y^j_{2k}(1)-iy^j_{2k+1}(1)|^2  }=$$
$$=\sqrt{\sum_{k<d}  \big(|{z_{2k}}-y^j_{2k}(1)|^2+|{z_{2k+1}}-y^j_{2k+1}(1)|^2\big)  }=
\|z-{y^j}(1)\|$$
and analogously  $\|x+{x^j}\|=\|z-{y^j}(2)\|$,
$\|x-i{x^j}\|=\|z-{y^j}(3)\|$ and $\|x+i{x^j}\|=\|z-{y^j}(4)\|$. So
$\|x-x^j\|, \|x-i{x^j}\|, \|x+x^j\|, \|x+i{x^j}\|\geq 9/10$ for all $j<n$.

It follows from Lemma \ref{product-distance} that for any $j<n$ we have
$|\langle x, x^j\rangle|\leq \sqrt2(1-(9/10)^2/2)$.
We also have $\sqrt2(1-(9/10)^2/2)\leq \sqrt{9^2\over 6^2}{119\over 200}\leq
{178,5\over 200}\leq {180\over200}= 9/10$, so $|\langle x, x^j\rangle|\leq 9/10$
for every $j<n$.

If $x_j$ have arbitrary norms, we have 
$$|\langle x, x_j\rangle|=\|x_j\|\langle x, x_j/\|x_j\|\rangle
\leq (9/10)\|x_j\|.$$
 Also by Lemma \ref{equality}
$$\|R_x(x_j)\|^2=\langle R_x(x_j), x_j\rangle=\langle {\langle x_j, x\rangle}x, x_j\rangle
=\langle x_j, x\rangle\overline{ \langle x_j, x\rangle}
=|\langle x_j, x\rangle|^2\leq$$
$$\leq(9/10)^2\|x_j\|^2\leq (9/10)\|x_j\|^2.$$
\end{proof}

\subsection{Obtaining inclined intersecting subspaces in tensor products}\label{tensor}

For sets $A, B$ as usual $B^A$ denotes the set of all functions from $A$ to $B$.
$\{(a, b)\}$ will stand for a function whose domain is $\{a\}$ and which assumes
value $b$ at $a$. So any $t\in B^A$ can be written uniquely as $t=s\cup\{(a, b)\}$,
where $s\in B^{A\setminus\{a\}}$. 
We will view the Hilbert
space $\ell_2(B^A)$ as the tensor product of Hilbert spaces
$\bigotimes_{a\in A} \ell_2(B^{\{a\}})$, 
where $\langle \otimes_{a\in A}x_a, \otimes_{a\in A}y_a\rangle=
\prod_{a\in A} \langle x_a, y_a\rangle$ (\cite[6.3.1]{murphy}). This notation
 will allow us to handle many-fold tensor 
products with precision and a relatively modest amount of indices.
For example $e_{\{a, b\}}\otimes e_s=e_{s\cup\{(a, b)\}}=e_s\otimes e_{\{a, b\}}$ and we do not
need to worry about the order of factors in tensor products of Hilbert spaces. However in the
case of tensors of operators we will be using a more standard notation
$S\otimes ... 
\otimes \overset{a}{T}\otimes ... \otimes S$ to indicate  with a letter above $T$ at which coordinate
we put the operator $T$.
Recall that $R_v$ denotes the rank one orthogonal projection onto
the direction of a nonzero vector $v$.

\begin{definition}\label{tensor-projection} Suppose that $A, B$ are nonempty sets
and $v\in \ell_2(B^{\{a\}})$, then we define
the orthogonal projection $R_{a, v}\in \B(\ell_2(B^{A}))$ onto the 
subspace $\ell_2(B^{A\setminus\{a\}})\otimes \C v$ 
of dimension $|B|^{|A|-1}$ by
$$P_{\alpha, v}^{A, B}=I\otimes ... 
\otimes \overset{a}{R_{v}}\otimes ... \otimes I.$$
\end{definition}

More explicitly 
for each 
  $x=\sum_{t\in B^A} x_te_t\in \ell_2(B^A)$, $a\in A$ and $s\in  B^{A\setminus\{a\}}$
 we define $x(s)=\sum x(s)_{(a, b)}e_{\{(a, b)\}}\in \ell_2(B^{\{a\}})$ by
$$x(s)_{(a, b)}=  x_{s\cup \{(a, b)\}}.\leqno (1)$$
That is we arrange the coordinates of $x$ into $|A|^{|B|-1}$ blocks
$x(s)$ for $s\in B^{A\setminus\{a\}}$.
Then 
given $v=\sum_{b\in B}v^be_{\{(a, b)\}}\in \ell_2(B^{\{a\}})$ we define 
$P_{a, v}^{A, B}: \ell_2(B^A)\rightarrow \ell_2(B^A)$ by
$$P_{a, v}^{A, B}(\sum_{t\in B^A} x_te_t)=
\sum_{s\in B^{A\setminus\{a\}}}\sum_{b\in B}\langle x(s), v\rangle v^be_{s\cup\{(a, b)\}}.\leqno (2)$$
That is to each block $x(s)$ of the coordinates of $x$ we apply the
projection $R_v$ onto the direction of $v$.  To check that this
corresponds to Definition \ref{tensor-projection} one can check this
for basic vectors $e_t=e_{s\cup\{(a, b)\}}$, namely
$$\sum_{b\in B}\langle e_{\{(a,b)\}}, v\rangle v^be_{s\cup\{(a, b)\}}
=\big(R_v(e_{\{(a,b)\}})\big)\otimes e_s.$$
\begin{lemma}\label{products}Let $A, B$ 
be finite sets and let $v_a\in \ell_2(B^{\{a\}})$ be nonzero for each $a\in A$.
Then  for any nonzero choice of $v_a\in \ell_2(B^{\{a\}})$ for $a\in A$
the product $\prod_{a\in A}P_{a, v_a}^{A, B}\leq P_{a, v_a}^{A, B}$
is a nonzero
projection.
\end{lemma}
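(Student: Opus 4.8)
The statement to prove is Lemma \ref{products}: for finite sets $A, B$ and nonzero vectors $v_a\in \ell_2(B^{\{a\}})$, the operator $\prod_{a\in A}P_{a, v_a}^{A, B}$ is a nonzero projection dominated by each factor. The key observation is that the operators $R_{v_a}\in \B(\ell_2(B^{\{a\}}))$ act on \emph{different} tensor coordinates, hence the factors $P_{a, v_a}^{A, B} = I\otimes\cdots\otimes\overset{a}{R_{v_a}}\otimes\cdots\otimes I$ commute pairwise. Since each $R_{v_a}$ is an orthogonal projection, so is each $P_{a, v_a}^{A, B}$, and a product of pairwise-commuting orthogonal projections is again an orthogonal projection. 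That the product is dominated by each factor is immediate for commuting projections (if $P, Q$ are commuting projections then $PQ$ is a projection with $PQ \le P$ and $PQ \le Q$), and this extends to the finite product by induction on $|A|$.

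The only substantive point is that the product is \emph{nonzero}. Here I would compute the range directly. Using the canonical identification $\ell_2(B^A)\equiv\bigotimes_{a\in A}\ell_2(B^{\{a\}})$ and the isomorphism of $\B$ of a tensor product with the tensor product of the $\B$'s, one checks that
$$\prod_{a\in A}P_{a, v_a}^{A, B}=\bigotimes_{a\in A}R_{v_a},$$
which is the analogue of displayed formula (P) from the introduction in this notational setup. The range of $\bigotimes_{a\in A}R_{v_a}$ contains the nonzero vector $\bigotimes_{a\in A}v_a$ (indeed equals $\C\cdot\bigotimes_{a\in A}v_a$, so it is the rank-one projection onto $\bigotimes_{a\in A}v_a$), and $\bigotimes_{a\in A}v_a\ne 0$ because each $v_a\ne 0$ and the norm of a tensor is the product of the norms. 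Hence the product is a nonzero (rank-one) projection.

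To keep the argument self-contained with the explicit coordinate description given by formulas (1) and (2) before the lemma, I would instead argue by induction on $|A|$ without invoking abstract tensor-product facts about $\B$. Write $A = A'\cup\{a_0\}$. One shows that $P_{a, v_a}^{A, B}$ and $P_{a_0, v_{a_0}}^{A, B}$ commute for $a\in A'$ by checking on basis vectors $e_{s\cup\{(a_0, b_0)\}}$ using (2): applying $P_{a_0, v_{a_0}}^{A, B}$ touches only the $a_0$-block of coordinates while $P_{a, v_a}^{A, B}$ touches only the $a$-block, and these are disjoint groups of coordinates, so the two operations can be performed in either order. Then $\prod_{a\in A}P_{a, v_a}^{A, B} = \big(\prod_{a\in A'}P_{a, v_a}^{A, B}\big)\,P_{a_0, v_{a_0}}^{A, B}$ is a product of a projection (by the inductive hypothesis) with a commuting projection, hence a projection dominated by each; and evaluating on $\bigotimes_{a\in A}v_a = e$-style tensor shows it fixes $\bigotimes_{a\in A}v_a\ne 0$, so it is nonzero.

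\textbf{Main obstacle.} There is no real obstacle: the heart of the matter is bookkeeping, making sure the commutation of operators acting on disjoint tensor coordinates is phrased correctly in the paper's index notation $S\otimes\cdots\otimes\overset{a}{T}\otimes\cdots\otimes S$, and that ``dominated by each factor'' is correctly deduced from commutativity of orthogonal projections. The slightly delicate point to state cleanly is that $\prod_{a\in A}P_{a,v_a}^{A,B}=\bigotimes_{a\in A}R_{v_a}$ is exactly the content of formula (P) reformulated, so I would simply refer back to the discussion around (P) (or to \cite[6.3.1]{murphy} for the tensor identifications) rather than re-derive it.
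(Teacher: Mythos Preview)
Your proposal is correct and follows essentially the same approach as the paper: the paper's proof is the single displayed identity $\prod_{a\in A}P_{a,v_a}^{A,B}=\bigotimes_{a\in A}R_{v_a}$, after which (in a ``more explicitly'' paragraph) it verifies in coordinates that the tensor $\bigotimes_{a\in A}v_a$ is fixed by each factor. You unpack the same identity, add the routine justification that commuting orthogonal projections have a product that is again a projection dominated by each factor, and give the same nonzero witness $\bigotimes_{a\in A}v_a$; your optional inductive variant is just a repackaging of the same computation.
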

\begin{proof}
$$\prod_{a\in A}P_{a, v_a}^{A, B}=
\prod_{a\in A} (I\otimes ... \otimes \overset{a}R_{v_a}\otimes ... \otimes I)
=\bigotimes_{a\in A}R_{v_a}.$$
\end{proof}

More explicitly if  $v_a=\sum v_{(a, b)}e_{\{(a, b)\}}$ for $a\in A$ then we consider
$$v=\sum_{t\in B^A}\prod_{a\in A}v_{a, t(a)}e_t\in \ell_2(B^A).$$
It is enough to show that each of the projections $P_{a, v_a}^{A, B}$ for
 $a\in A$ leaves $v$ intact.  Indeed by (1) and (2) we have
 $v(s)=(\prod_{a'\in A\setminus \{a\}}v_{a', s(a')})v_a$, so 
 $$P_{a, v_a}^{A, B}(v)= P_{a, v_a}^{A, B} (\sum_{t\in B^A}\prod_{a\in A}v_{a, t(a)}e_t)=$$
$$=\sum_{s\in B^{A\setminus\{a\}}}
\sum_{b\in B}(\prod_{a'\in A\setminus
 \{a\}}v_{a', s(a')})\langle v_a, v_a\rangle v_{(a, b)}e_{s\cup\{(a, b)\}}=$$
$$=\sum_{t\in B^A}\prod_{a\in A}v_{a, t(a)}e_t=v. $$

\begin{lemma}\label{inclined-subspace}
Suppose that $A, B$ are finite sets such that $|A|=m>1$,  $|B|=d\geq 2^7$ and
$\{x^j: j<n\}$ are  vectors of $\ell_2(B^A)$ for some $n\in\N$. Moreover let us assume that
 $nd^{m-1}<(100/91)^d/8$.
Then for every $a\in A$ there is a nonzero $v_a\in \ell_2(B^{\{a\}})$ such
that $\|P_{v_a, a}^{A, B}(x^j)\|^2\leq(9/10)\|x^j\|^2$ for each $j<n$.
\end{lemma}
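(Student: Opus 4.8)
The plan is to reduce Lemma~\ref{inclined-subspace} to Lemma~\ref{inclined-vector} coordinate by coordinate, fixing one $a\in A$ at a time. Fix $a\in A$ and write $A'=A\setminus\{a\}$, so $|A'|=m-1$. Using the identification $\ell_2(B^A)=\ell_2(B^{A'})\otimes\ell_2(B^{\{a\}})$ together with the block decomposition $(1)$ from Subsection~\ref{tensor}, each vector $x^j$ gives rise to the family of its blocks $\{x^j(s): s\in B^{A'}\}\subseteq\ell_2(B^{\{a\}})$, where by $(1)$ we have $x^j(s)_{(a,b)}=x^j_{s\cup\{(a,b)\}}$. Note that $\ell_2(B^{\{a\}})$ has dimension $|B|=d$, and that the total number of blocks, summed over all $j<n$, is $n\cdot|B^{A'}|=nd^{m-1}$, which by hypothesis is strictly less than $(100/91)^d/8$. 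Hence the collection $\{x^j(s): j<n,\ s\in B^{A'}\}$ is a family of fewer than $(100/91)^d/8$ vectors in the $d$-dimensional complex Hilbert space $\ell_2(B^{\{a\}})$, with $d\geq 2^7$.

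Now apply Lemma~\ref{inclined-vector} to this family: there is a unit vector $v_a\in\ell_2(B^{\{a\}})$ such that $\|R_{v_a}(x^j(s))\|^2\leq(9/10)\|x^j(s)\|^2$ for every $j<n$ and every $s\in B^{A'}$. It remains to translate the blockwise inequality back into an inequality for $P_{a,v_a}^{A,B}$ acting on $x^j$. By the explicit description $(2)$, $P_{a,v_a}^{A,B}$ acts on each block $x^j(s)$ precisely as the rank-one projection $R_{v_a}$; more precisely, if one reassembles the vector $P_{a,v_a}^{A,B}(x^j)$ via $(1)$ and $(2)$ one sees that its block at $s$ is exactly $R_{v_a}(x^j(s))$. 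Since the blocks indexed by distinct $s\in B^{A'}$ occupy pairwise disjoint sets of coordinates, $\|P_{a,v_a}^{A,B}(x^j)\|^2=\sum_{s\in B^{A'}}\|R_{v_a}(x^j(s))\|^2$ and similarly $\|x^j\|^2=\sum_{s\in B^{A'}}\|x^j(s)\|^2$. Summing the blockwise estimates gives
$$\|P_{a,v_a}^{A,B}(x^j)\|^2=\sum_{s\in B^{A'}}\|R_{v_a}(x^j(s))\|^2\leq(9/10)\sum_{s\in B^{A'}}\|x^j(s)\|^2=(9/10)\|x^j\|^2$$
for each $j<n$, as required. Running this argument separately for each $a\in A$ produces the desired vectors $v_a$ for all $a\in A$ at once.

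The only genuinely delicate point is the bookkeeping in the previous paragraph: verifying that the block at $s$ of $P_{a,v_a}^{A,B}(x^j)$ is $R_{v_a}$ applied to the block at $s$ of $x^j$, and that the Pythagorean decomposition across blocks is legitimate. Both follow directly from formulas $(1)$ and $(2)$ and the fact that $\{e_{s\cup\{(a,b)\}}: s\in B^{A'},\ b\in B\}$ is an orthonormal basis partitioned by the value of $s$, but they must be stated carefully. The arithmetic hypothesis $nd^{m-1}<(100/91)^d/8$ is exactly what is needed to feed Lemma~\ref{inclined-vector} with its bound $n<(100/91)^d/8$ after replacing the vector count $n$ by the block count $nd^{m-1}$. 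Everything else is a direct invocation of Lemma~\ref{inclined-vector}.
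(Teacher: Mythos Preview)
Your proof is correct and follows essentially the same approach as the paper: fix $a\in A$, decompose each $x^j$ into its blocks $x^j(s)$ for $s\in B^{A\setminus\{a\}}$, apply Lemma~\ref{inclined-vector} to the resulting $nd^{m-1}$ vectors in $\ell_2(B^{\{a\}})$, and then reassemble via the Pythagorean identity across blocks. The paper's proof is slightly terser but the logical structure is identical.
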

\begin{proof}
Fix $A, B, a$ and $\{x^j: j<n\}$ as in the lemma. 
Let $x^j=\sum_{t\in B^A} x^j_te_t$. As in (1) we can write it as
$$x^j=\sum_{s\in B^{A\setminus\{a\}}}x^j(s)\otimes e_s.$$
Apply Lemma \ref{inclined-vector} to the collection 
$\{x^j(s): j<n, \ s\in  B^{A\setminus\{a\}}\}$ 
of cardinality $nd^{m-1}$ and obtain a unit vector
$$v=\sum_{b\in B}v^be_{(a, b)}\in \ell_2(B^{\{a\}})$$
such that 
$$\| R_v(x^j(s))\|^2
 \leq(9/10)\|x^j(s)\|^2$$ 
 for all $s\in B^{A\setminus\{a\}}$ and $j<n$.
 For each $s\in B^{A\setminus\{a\}}$ we have 
 $$\|P_{v, a}^{A, B}(x^j)\|^2=
\|\big(I\otimes ... 
\otimes \overset{a}{R_{v}}\otimes ... \otimes I\big)
(\sum_{s\in B^{A\setminus\{a\}}}x^j(s)\otimes e_s)\|^2=$$
$$=\|\sum_{s\in B^{A\setminus\{a\}}}R_{v}(x^j(s))\otimes e_s\|^2=
\sum_{s\in B^{A\setminus\{a\}}}\|R_{v}(x^j(s))\|^2\leq $$
$$\leq(9/10)\sum_{s\in B^{A\setminus\{a\}}}\|x^j(s)\|^2=(9/10)\|x^j\|^2.$$
\end{proof}

More explicitly using (1) and (2)
$$\|P_{v, a}^{A, B}(x^j)\|^2=
\sum_{s\in B^{A\setminus\{a\}}}\|\sum_{b\in B}\langle x^j(s), v\rangle v^be_{s\cup\{(a, b)\}}\|^2=$$
$$=\sum_{s\in B^{A\setminus\{a\}}}\|R_v(x^j(s))\|^2\leq
(9/10)\sum_{s\in B^{A\setminus\{a\}}}\|x^j(s)\|^2=(9/10)\|x^j\|^2.$$

\section{A family of projections and the pure state}

For this section we  fix  $d:\N\setminus\{0\}\rightarrow\N$ such that
for any $m\in \N$, $m>0$ we have
\begin{itemize}
\item $d(m)\geq 2^7$ ,
\item $32m^2(d(m)^{(2^m)})^2d(m)^{(2^m-1)}<(100/91)^{d(m)}$.
\end{itemize}
Such $d$ can be easily constructed as 
for each $m\in \N$ the polynomial $$p_m(x)=32m^2(x^{(2^m)})^2x^{(2^m-1)}$$
is smaller than the exponential function $(100/91)^x$ for sufficiently big $x\in \R$.

In the rest of this section we will identify $d(m)$
with the set $\{0, ..., d(m)-1\}$.
Define
$$\mathcal O=\bigcup_{m>0} d(m)^{(\seqm)}.$$
Note that the summands of this union are pairwise disjoint as they consist
of functions with different domains $\seqm$ for $m\in \N$. In this section instead of the usual 
$\ell_2=\ell_2(\N)$ we will work with $\ell_2(\mathcal O)$. 
For $m>0$ let $Q_m: \ell_2(\mathcal O)\rightarrow\ell_2(\mathcal O)$ be the orthogonal
projection   onto $\ell_2(d(m)^{(\seqm)})$ considered as a subspace of $\ell_2(\mathcal O)$
consisting of vectors whose coordinates in 
$\mathcal O\setminus d(m)^{(\seqm)}$ are zero. We will be dealing with
algebras $\B_m=Q_m\B(\ell_2(\mathcal O))Q_m$ for $m>0$, they will be identified 
with $\B(\ell_2(d(m)^{(\seqm)}))$.
The projections we will be constructing will be elements of
$$\bigoplus_{m>0}\B_m$$
So for operators $T_m\in \B(\ell_2(d(m)^{(\seqm)}))$ for $m>0$ we will have
$$\bigoplus_{m\in \N} T_m\in \B(\ell_2).$$

\begin{lemma}\label{main-lemma} Let $(e_k: k\in \N)$ be  an orthonormal
basis of $\ell_2(\mathcal O)$ and let $\sigma_m\in \seqm$ for each $m>0$.
For each $m>0$ there is $v_m\in \ell_2(d(m)^{\{\sigma_m\}})$
such that for each $k\in \N$ we have
$$|\langle \big(\bigoplus_{m>0}
P^{\seqm, d(m)}_{\sigma_m, v_{m}}\big)(e_k), e_k\rangle|\leq 19/20.$$
\end{lemma}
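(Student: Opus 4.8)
The plan is to deduce the estimate from Lemma \ref{equality} and the two ``inclination'' results of Section \ref{tensor}, by choosing the vectors $v_m$ one block at a time so that the projection $P_m:=P^{\seqm, d(m)}_{\sigma_m, v_m}$ is forced to tilt every basis vector $e_k$ that has appreciable mass in the $m$-th block, while each fixed $e_k$ meets only ``few'' blocks in a controlled way.

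\emph{Reduction.} Since $\mathcal{O}$ is the disjoint union of the sets $d(m)^{\seqm}$, we have $\ell_2(\mathcal{O})=\bigoplus_{m>0}\ell_2(d(m)^{\seqm})$ with $Q_m$ the orthogonal projection onto the $m$-th summand; hence $\sum_{m>0}\|Q_mx\|^2=\|x\|^2$ for every $x\in\ell_2(\mathcal{O})$, and, as the ranges of the $P_m$ lie in pairwise orthogonal blocks, $\big\|\big(\bigoplus_{m>0}P_m\big)(x)\big\|^2=\sum_{m>0}\|P_m(Q_mx)\|^2$. By Lemma \ref{equality}, $\langle(\bigoplus_{m>0}P_m)(e_k),e_k\rangle=\|(\bigoplus_{m>0}P_m)(e_k)\|^2$, a nonnegative real, so it suffices to produce the $v_m$ with $\sum_{m>0}\|P_m(Q_me_k)\|^2\le 19/20$ for every $k\in\N$.

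\emph{Choice of the $v_m$.} Fix $m>0$, write $n_m=d(m)^{(2^m)}$ for the dimension of the $m$-th block, and set $\varepsilon_m=1/(4m^2)$. Apply Lemma \ref{big-projections} to the basis $(e_k)_{k\in\N}$ and the $n_m$-dimensional subspace $\ell_2(d(m)^{\seqm})\subseteq\ell_2(\mathcal{O})$, whose orthogonal projection is $Q_m$: this gives $X_m\subseteq\N$ with $|X_m|\le n_m^2/\varepsilon_m=4m^2n_m^2$ and $\|Q_me_k\|^2<\varepsilon_m$ for all $k\in\N\setminus X_m$. Next apply Lemma \ref{inclined-subspace} with $A=\seqm$ (so $|A|=2^m\ge 2$), $B=d(m)$ (so $|B|=d(m)\ge 2^7$), and the family $\{Q_me_k:k\in X_m\}$ of $n:=|X_m|$ vectors in $\ell_2(d(m)^{\seqm})$; its cardinality hypothesis $n\cdot d(m)^{2^m-1}<(100/91)^{d(m)}/8$ holds since
$$|X_m|\,d(m)^{2^m-1}\le 4m^2n_m^2\,d(m)^{2^m-1}=\tfrac18\Big(32m^2\big(d(m)^{(2^m)}\big)^2d(m)^{(2^m-1)}\Big)<\tfrac18\,(100/91)^{d(m)}$$
by the assumed growth of $d$. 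We obtain a nonzero $v_m\in\ell_2(d(m)^{\{\sigma_m\}})$ with $\|P_m(Q_me_k)\|^2\le(9/10)\|Q_me_k\|^2$ for every $k\in X_m$ (if $X_m=\emptyset$ take any unit vector $v_m$).

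\emph{The estimate.} Fix $k\in\N$ and split $\sum_{m>0}\|P_m(Q_me_k)\|^2$ according to whether $k\in X_m$. For $k\in X_m$ the chosen $v_m$ gives $\|P_m(Q_me_k)\|^2\le(9/10)\|Q_me_k\|^2$; for $k\notin X_m$ we only use that $P_m$ is a contraction, $\|P_m(Q_me_k)\|^2\le\|Q_me_k\|^2<\varepsilon_m$. Put $a=\sum_{m:\,k\in X_m}\|Q_me_k\|^2$ and $b=\sum_{m:\,k\notin X_m}\|Q_me_k\|^2$, so $a+b=1$; then
$$\big\|\big(\textstyle\bigoplus_{m>0}P_m\big)(e_k)\big\|^2\le\tfrac{9}{10}a+b=1-\tfrac{a}{10}.$$
Since $\sum_{m\ge2}m^{-2}<\sum_{m\ge2}\tfrac1{m(m-1)}=1$, we get $b\le\sum_{m>0}\varepsilon_m=\tfrac14\sum_{m\ge1}m^{-2}<\tfrac14\cdot2=\tfrac12$, hence $a=1-b>1/2$ and the last display is $<1-1/20=19/20$. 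With the reduction above this proves the lemma.

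\emph{Main obstacle.} The delicate point is that a single vector $Q_me_k$ can be left entirely fixed by $P_m$ — precisely when every ``slice'' $(Q_me_k)(s)$, $s\in d(m)^{\seqm\setminus\{\sigma_m\}}$, is proportional to $v_m$ — so on the blocks where $e_k$ concentrates one genuinely needs $v_m$ simultaneously inclined, over all $k\in X_m$, to all these slices. Lemma \ref{inclined-subspace} furnishes such a $v_m$, but only while the total number $|X_m|\,d(m)^{2^m-1}$ of slices stays below the exponential threshold $(100/91)^{d(m)}/8$; reconciling this with the requirement that the uncontrolled blocks carry less than half of $\|e_k\|^2=1$, i.e. $\sum_m\varepsilon_m<1/2$ — which forces $\varepsilon_m\to 0$ and hence allows $|X_m|$ to be only polynomially large in $n_m$ — is exactly what the two inequalities imposed on $d$ accomplish.
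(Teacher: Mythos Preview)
Your proof is correct and follows essentially the same route as the paper's: you pick $X_m$ via Lemma~\ref{big-projections}, choose $v_m$ via Lemma~\ref{inclined-subspace} to tilt the $Q_m e_k$ for $k\in X_m$, and combine the two parts using Lemma~\ref{equality}. The only cosmetic difference is your choice $\varepsilon_m=1/(4m^2)$ together with the elementary bound $\sum_{m\ge1}m^{-2}<2$, in place of the paper's $\varepsilon_m=3/(\pi^2 m^2)$ with the exact value $\sum m^{-2}=\pi^2/6$; both yield the same $|X_m|\le 4m^2(d(m)^{2^m})^2$ and the same final estimate.
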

\begin{proof} 
For $m>0$ let 
$X_m=\{k\in \N: \|Q_m(e_k)\|^2>3/\pi^2m^2\}.$
As $\sum_{m>0}{1\over {m}^2}={\pi^2\over 6}$, note that  for every $k\in \N$ we have
$$\sum_{\{m>0: k\not\in  X_m\}}\|Q_m(e_k)\|^2
\leq {3\over \pi^2}\sum_{m>0}{1\over {m}^2}\leq 1/2. \leqno (*)$$
For $m\in \N$ by Lemma \ref{big-projections}
applied for $\varepsilon= 3/\pi^2m^2$ knowing that the dimension  of
$\B_m$ is $d(m)^{2^m}$ we have that 
$|X_m|\leq \pi^2m^2(d(m)^{2^m})^2/3\leq 4m^2(d(m)^{2^m})^2$.

Now for $m>0$ consider $\{Q_m(e_k): k\in X_m\}$. Since 
$8|X_m|d(m)^{(2^m-1)}<(100/91)^{d(m)}$ using Lemma \ref{inclined-subspace} 
 we can find 
$v_m\in \ell_2(d(m)^{\{\sigma_m\}})$ such that 
$$\|P_{\sigma_m, v_m}^{\seqm, d(m)}(e_k)\|=\|P_{\sigma_m, v_m}^{\seqm, d(m)}(Q_m(e_k))\|
\leq(9/10)\|Q_m(e_k)\|^2$$
for each $k\in X_m$ since 
the range of $P_{\sigma_m, v_m}^{\seqm, d(m)}$ is included in the range of $Q_m$.

For each $k\in \N$ we have
$$\|\bigoplus_{m\in \N}P_{\sigma_m, v_m}^{\seqm, d(m)}(e_k)\|^2\leq
\sum_{\{m>0: k\in X_m\}}\|P_{\sigma_m, v_m}^{\seqm, d(m)}(e_k)\|^2+
\sum_{\{m>0: k\not\in X_m\}}\|Q_{m}(e_k)\|^2\leq$$
$$\leq (9/10)\sum_{\{m>0: k\in X_m\}}\|Q_{m}(e_k)\|^2+
\sum_{\{m>0: k\not\in X_m\}}\|Q_{m}(e_k)\|^2.$$
Putting $\alpha=\sum_{\{m>0: k\not\in X_m\}}\|Q_{m}(e_k)\|^2$ we have 
that $\sum_{\{m>0: k\in X_m\}}\|Q_{m}(e_k)\|^2=1-\alpha$ as $e_k=\sum_{m>0}Q_m(e_k)$. So
$$\|\bigoplus_{m>0}P_{\sigma_m, v_m}^{\seqm, d(m)}(e_k)\|^2\leq (9/10)(1-\alpha)+\alpha$$
However $\alpha\in [0,1/2]$ by (*) and
$(9/10)(1-\alpha)+\alpha=(1/10)\alpha+(9/10)$ assumes its maximum
on $[0,1/2]$ at $\alpha=1/2$. The maximum is  $19/20$ and
so for each $k\in \N$ by Lemma \ref{equality} we have 
$$|\langle \bigoplus_{m>0}P_{\sigma_m, v_m}^{\seqm, d(m)}(e_k),  e_k\rangle|=\|\bigoplus_{m>0}P_{\sigma_m, v_m}^{\seqm, d(m)}(e_k)\|^2
\leq 19/20,$$
as required.

\end{proof}

\begin{theorem}\label{family} There is a collection $(P_\alpha: \alpha\in \{0,1\}^\N)$ of infinite
dimensional orthogonal projections in $\B(\ell_2)$
 such that for any $\alpha_1, ..., \alpha_n\in \{0,1\}^\N$ and $n\in \N$
 there is a nonzero projection $P_{\alpha_1, ...,\alpha_n}\leq P_{\alpha_1}, ..., P_{\alpha_n}$
 and for every
orthonormal basis $(e_k)_{k\in \N}$ of $\ell_2$ there is $\alpha\in \{0,1\}^\N$
such that for each $k\in \N$ we have 
$$|\langle P_\alpha(e_k), e_k\rangle|\leq 19/20.$$
\end{theorem}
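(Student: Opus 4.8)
The plan is to build the projections $P_\alpha$ by the tensor-product scheme described in the introduction, using Lemma~\ref{main-lemma} as the engine that guarantees the non-diagonalization inequality. Concretely, for each $\alpha\in\{0,1\}^\N$ and each $m>0$ I would set $\sigma^\alpha_m=\alpha|m\in\{0,1\}^m$, and I would look for a vector $v^\alpha_m\in\ell_2(d(m)^{\{\sigma^\alpha_m\}})$ so that
$$P_\alpha=\bigoplus_{m>0}P^{\{0,1\}^m,d(m)}_{\sigma^\alpha_m,\,v^\alpha_m}$$
is a projection in $\B(\ell_2(\mathcal O))\cong\B(\ell_2)$. That each summand is an orthogonal projection, and hence $P_\alpha$ is, is immediate from Definition~\ref{tensor-projection}; each summand acting on $\ell_2(d(m)^{(\{0,1\}^m)})$ has range of dimension $d(m)^{2^m-1}\geq 1$, so $P_\alpha$ is infinite-dimensional. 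The point where I must be careful is that the choice of $v^\alpha_m$ depends on the ambient basis $(e_k)$ against which we want (ND); so I would fix \emph{one} basis at a time and use a cardinality count to spread the $2^{\aleph_0}$ available indices $\alpha$ over the $2^{\aleph_0}$ orthonormal bases.

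The first main step is the \textbf{lower bound on finite products}. For distinct $\alpha_1,\dots,\alpha_n\in\{0,1\}^\N$, the restrictions $\alpha_1|m,\dots,\alpha_n|m$ are pairwise distinct for all $m$ large enough, say $m\geq m_0$. For such $m$, in the algebra $\B_m$ the operators $P^{\{0,1\}^m,d(m)}_{\alpha_i|m,\,v^{\alpha_i}_m}$ sit at distinct tensor coordinates, so by the computation in Lemma~\ref{products} their product over $i$ equals $\bigotimes$ of rank-one projections along the various coordinates $\alpha_i|m$ with the identity elsewhere, which is a nonzero projection in $\B_m$ of dimension $d(m)^{2^m-n}$ (for $m$ with $2^m\geq n$). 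Taking the direct sum over $m\geq m_1:=\max(m_0,\lceil\log_2 n\rceil)$ of these nonzero projections yields a nonzero projection $P_{\alpha_1,\dots,\alpha_n}$ below every $P_{\alpha_i}$. (Below $m_1$ one simply uses the zero projection in each $\B_m$; dropping finitely many summands only loses finite-dimensional pieces and the result is still nonzero because infinitely many summands survive.)

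The second main step is the \textbf{non-diagonalization guarantee}, and this is where Lemma~\ref{main-lemma} does the work. Fix an orthonormal basis $(e_k)_{k\in\N}$ of $\ell_2(\mathcal O)$ and a single infinite branch $\alpha\in\{0,1\}^\N$; apply Lemma~\ref{main-lemma} with $\sigma_m=\alpha|m$ to obtain vectors $v^\alpha_m$ such that the resulting $P_\alpha$ satisfies $|\langle P_\alpha(e_k),e_k\rangle|\leq 19/20$ for all $k$. So for that one basis, a \emph{single} $\alpha$ already witnesses the inequality — and that is all the statement asks for. To get one witness for \emph{every} basis simultaneously while keeping the projections defined for all $\alpha\in\{0,1\}^\N$, I would first fix an enumeration $(\beta^i)_{i<2^{\aleph_0}}$ of all orthonormal bases of $\ell_2$ and an injection $i\mapsto\alpha^i\in\{0,1\}^\N$; for each $i$, apply Lemma~\ref{main-lemma} to the basis $\beta^i$ and the branch $\alpha^i$ to pin down the vectors $v^{\alpha^i}_m$ (so that $P_{\alpha^i}$ works for $\beta^i$), and for every $\alpha$ not of the form $\alpha^i$ choose the $v^\alpha_m$ arbitrarily nonzero (say the first standard basis vector of $\ell_2(d(m)^{\{\alpha|m\}})$). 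Since the first step never used the particular values of the $v^\alpha_m$, only their non-vanishing, the product property survives this ad hoc choice.

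The step I expect to be the genuine obstacle — and the reason the two steps coexist cleanly — is reconciling the \emph{branch-by-branch} freedom needed for (ND) with the \emph{global} non-vanishing needed for the product property: Lemma~\ref{main-lemma} fixes the $v^\alpha_m$ for a given basis along a given branch, and different bases could in principle demand incompatible choices along the \emph{same} branch. The enumeration trick above resolves this precisely because distinct bases are matched with distinct branches, so no branch is ever constrained by two bases; the only thing one must verify is that such an injection exists, i.e.\ that there are at most $2^{\aleph_0}$ orthonormal bases of $\ell_2$ (clear, as each is a function $\N\to\ell_2$ and $|\ell_2|=2^{\aleph_0}$), and exactly $2^{\aleph_0}$ branches. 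With that in hand, all three assertions of Theorem~\ref{family} follow, and this is exactly the family fed into the Hahn--Banach/pure-state argument sketched in the introduction to produce the non-diagonalizable pure state of Theorem~\ref{main-theorem}.
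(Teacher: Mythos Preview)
Your proposal is correct and follows essentially the same route as the paper: the paper also defines $P_\alpha=\bigoplus_m P^{\{0,1\}^m,d(m)}_{\alpha|m,\,v^\alpha_m}$, enumerates all orthonormal bases by $\{0,1\}^\N$ (using $(2^\omega)^\omega=2^\omega$), invokes Lemma~\ref{main-lemma} branch by branch for the $19/20$ bound, and uses Lemma~\ref{products} at a single $m$ with pairwise distinct $\alpha_j|m$ for the common lower bound. The only cosmetic differences are that the paper uses a surjection from $\{0,1\}^\N$ onto the set of bases (so no ``leftover'' $\alpha$'s need an arbitrary choice of $v^\alpha_m$) and exhibits the nonzero subprojection at one level $m$ rather than summing over all large $m$.
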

\begin{proof} We will prove the theorem for
$\ell_2(\mathcal O)$ instead of $\ell_2=\ell_2(\N)$.
Since $\mathcal O$ is countably infinite, this makes no difference.
Enumerate all orthonormal bases of $\ell_2(\mathcal O)$
as $\{(e_k^\alpha)_{k\in \N}: \alpha\in \{0,1\}^\N\}$. This is possible since
by the cardinal equality $(2^\omega)^\omega=2^\omega$
both $\{0,1\}^\N$ and the collection of all orthonormal bases of $\ell_2$
have the same cardinality equal to the continuum.
For $\alpha\in \{0,1\}^\N$  consider
$$P_\alpha=\bigoplus_{m\in \N}P_{\alpha|m, v_m^\alpha}^{\seqm, d(m)},$$
where $v_m^\alpha$s are chosen according to Lemma \ref{main-lemma}
for the basis $(e^\alpha_k)_{k\in \N}$ and $\sigma_m=\alpha|m$
which is an element of $\{0,1\}^m$ formed by the first $m$ terms of $\alpha$. Hence we have 
$|\langle P_\alpha(e_k^\alpha), e_k^\alpha\rangle|\leq 19/20$ for each $k\in \N$ and
each $\alpha\in \{0,1\}^\N$.

Now let $\alpha_1, ..., \alpha_n\in \{0,1\}^\N$. Let $m\in \N$
be such that $\alpha_j|m\not=\alpha_{j'}|m$ for any two $1\leq j<j'\leq m$.
By Lemma \ref{products}
$$\prod_{1\leq j\leq n} P_{\alpha_j|m, v_m^{\alpha_j}}^{\seqm, d(m)}$$
is a projection dominated by each 
$P_{\alpha_j|m, v_m^{\alpha_j}}^{\seqm, d(m)}$s for $1\leq j\leq m$ 
hence the same is true for
the projections $P_{\alpha_1}, ..., P_{\alpha_n}$.
\end{proof}

To construct our pure state we  a result relating certain collections of projections
in $\B(\ell_2)$ and pure states on $\B(\ell_2)$.
Based on Chapter 6 of \cite{farah-wofsey} it seems that the following result is  due to N. Weaver.
We provide the proof for the convenience of the reader.

\begin{lemma}\label{ext-pure} Suppose that $(P_j)_{j\in J}$ is a collection of projections in
$\B(\ell_2)$ such that for any $j_1, ..., j_n\in J$ there is a nonzero
projection $P$ such that $P\leq P_{j_i}$ for each $1\leq i\leq n$.
Then there is a pure state $\phi$ on $\B(\ell_2)$ such that
$\phi(P_j)=1$ for all $j\in J$.
\end{lemma}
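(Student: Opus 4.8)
The plan is to build $\phi$ as a weak$^*$ limit of vector states along a suitable ultrafilter, using the finite-intersection hypothesis to locate the vectors. First I would consider the family $\mathcal{D} = \{ \mathrm{ran}(P) : P \text{ a nonzero projection}, \ P \le P_j \text{ for finitely many } j\in J\}$; the hypothesis says that for every finite $F\subseteq J$ there is a nonzero projection $P_F \le P_j$ for all $j\in F$, and clearly $P_{F\cup F'} \le P_F \wedge P_{F'}$ witnesses that these form a downward-directed family of nonzero projections. Pick, for each finite $F\subseteq J$, a unit vector $x_F \in \mathrm{ran}(P_F)$, so that $P_j(x_F) = x_F$ whenever $j\in F$. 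Let $u$ be an ultrafilter on the directed set $[J]^{<\omega}$ (equivalently on $\mathbb{N}$, after fixing a cofinal sequence if $J$ is countable, or more generally a fine ultrafilter on $[J]^{<\omega}$) that refines the order filter, i.e. for every finite $F_0$ the set $\{F : F \supseteq F_0\}$ belongs to $u$.

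Next I would define $\phi(T) = \lim_u \langle T(x_F), x_F\rangle$ for $T\in \B(\ell_2)$. Each $T\mapsto \langle T(x_F), x_F\rangle$ is a vector state, hence a state, and the ultralimit of states over a bounded net is again a state: linearity and positivity pass to the limit, and $\phi(I) = 1$ since each $\langle I x_F, x_F\rangle = 1$. For fixed $j\in J$, the set $\{F : j\in F\}$ is in $u$ by fineness, and on that set $\langle P_j(x_F), x_F\rangle = \langle x_F, x_F\rangle = 1$, so $\phi(P_j) = 1$.

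The remaining, and main, point is purity of $\phi$. Here I would use the standard characterization: a state $\phi$ on $\B(\ell_2)$ is pure iff for every $0 \le T \le I$, $\phi(T)\in\{0,1\}$ forces nothing directly, but the right tool is that $\phi$ is pure iff whenever $\psi$ is a positive functional with $\psi \le \phi$ then $\psi = \lambda\phi$; equivalently, via the GNS construction, iff the GNS representation is irreducible. Concretely, I would argue as follows. Suppose $0\le S\le I$ with $\phi(S) = 1$; since $1 = \phi(S) = \lim_u\langle S(x_F),x_F\rangle$ and $\langle (I-S)x_F,x_F\rangle \ge \|(I-S)^{1/2}x_F\|^2$, on a $u$-large set $\|(I-S)^{1/2}x_F\|$ is small, hence $\phi((I-S)^{1/2}\cdot(I-S)^{1/2})$-type estimates give $\phi(SAS) = \phi(A)$ and $\phi(AS) = \phi(A) = \phi(SA)$ for all $A$ by Cauchy--Schwarz in the GNS inner product. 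The key fact that makes $\phi$ pure is that the projections $P_j$ with $\phi(P_j) = 1$ can be taken with $\mathrm{ran}(P_F)$ of any prescribed finite codimension's complement — more precisely, I would invoke that the downward-directed family of ranges $\mathrm{ran}(P_F)$, being nonzero projections closed under meeting, either has nonzero infimum (giving a vector state) or, crucially, one refines within it to a \emph{maximal} such family; a standard Zorn's lemma argument produces a maximal downward-directed family $\mathcal{P}$ of nonzero projections extending $\{P_F\}$, and maximality forces: for every projection $Q$, either $Q\wedge P > 0$ for all $P\in\mathcal P$, or $(I-Q)\wedge P>0$ for all $P\in\mathcal P$. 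Taking $\phi$ to be the ultralimit state adapted to $\mathcal P$, this dichotomy yields $\phi(Q)\in\{0,1\}$ for every projection $Q$, hence $\phi$ is multiplicative on every masa containing $Q$, and a state on $\B(\ell_2)$ that is $\{0,1\}$-valued on all projections is pure. I expect the delicate step to be exactly this: verifying that the maximal directed family of projections produces a state that is pure — i.e. pinning down that $\{0,1\}$-valuedness on projections (or equivalently the refinement/maximality property) is both achievable by Zorn and sufficient for purity, rather than merely verifying the easy parts ($\phi$ is a state and $\phi(P_j)=1$).
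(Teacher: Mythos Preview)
Your construction of a state $\phi$ with $\phi(P_j)=1$ for all $j$ via an ultralimit of vector states is fine, and in spirit close to the paper's use of weak$^*$ compactness of the state space. The problem is the purity argument.

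The dichotomy you derive from maximality is false in $\B(\ell_2)$. You claim that for a maximal downward-directed family $\mathcal P$ of nonzero projections and any projection $Q$, either $Q\wedge P>0$ for all $P\in\mathcal P$ or $(I-Q)\wedge P>0$ for all $P\in\mathcal P$. Your implicit reasoning is that if $Q\wedge P_1=0$ and $(I-Q)\wedge P_2=0$, then a common lower bound $P_3\le P_1,P_2$ would satisfy $Q\wedge P_3=(I-Q)\wedge P_3=0$, forcing $P_3=0$. But that last implication fails: projections in $\B(\ell_2)$ do not form a distributive lattice, and for non-commuting $Q,P_3$ one can easily have $Q\wedge P_3=(I-Q)\wedge P_3=0$ with $P_3\neq 0$ (take $P_3$ rank one onto a vector in neither $\mathrm{ran}\,Q$ nor $\mathrm{ran}(I-Q)$). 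So the ``maximal filter of projections'' idea, which is exactly how one proves the analogous statement in the commutative case, does not transplant to $\B(\ell_2)$. Even granting the dichotomy, you never explain why $Q\wedge P>0$ for all $P\in\mathcal P$ forces $\phi(Q)=1$ for your ultralimit state; and the further claim that a state which is $\{0,1\}$-valued on all projections of $\B(\ell_2)$ must be pure is asserted without argument.

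The paper sidesteps all of this with a one-line observation: the set
\[
F=\{\psi\in S:\psi(P_j)=1\ \text{for all }j\in J\}
\]
is nonempty (by the same compactness you use), convex, weak$^*$ compact, and, crucially, a \emph{face} of the state space $S$: if $\psi=\alpha\psi_1+(1-\alpha)\psi_2\in F$ with $\psi_1,\psi_2\in S$ and $0<\alpha<1$, then $\psi_i(P_j)=1$ for all $j$, so $\psi_1,\psi_2\in F$. By Krein--Milman $F$ has an extreme point $\phi$, and since $F$ is a face, $\phi$ is extreme in $S$, i.e.\ pure. This avoids any lattice-theoretic reasoning about projections entirely.
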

\begin{proof} Let $S$ denote the set of states on $\B(\ell_2)$.
 Let $\mathcal P$ denote the family of all finite subsets of $J$ and
 let $P_a\leq P_{j_1}, ...,  P_{j_n}$ be the projection as in the lemma for
 $a=\{j_1, ...j_n\}$.
  As $\|P_a\|=1$ for every $a\in \mathcal P$, there
are  states $\phi_a\in S$ such that $\phi_a(P_a)=1$ (\cite[5.1.11]{murphy})
which satisfy $\phi(P_j)=1$ for each $j\in a$ as $\phi(P_a)\leq \phi(P_j)\leq 1$.
For $a\in \mathcal P$ consider
$$F_{a}=\{\phi\in S: \phi(P_j)=1\ \hbox{for all}\ j\in a\}$$

$F_{a}$s are convex, nonempty,  weak$^*$ closed and form a centered family
as $F_{a\cup a'}\subseteq F_a\cap F_{a'}$ for all $a, a'\in \mathcal P$, so by the compactness
of the dual ball of $\B(\ell_2)$ in the weak$^*$ topology we have 
$\bigcap_{a\in \mathcal P}F_a\not=\emptyset$. Moreover $\bigcap_{a\in \mathcal P}F_a\not=\emptyset$ is convex as the intersection of convex
sets. By the Krein-Milman theorem $\bigcap_{a\in \mathcal P}F_a$ has an extreme point $\phi$.
We claim that $\phi$ is the desired pure state. 
If $\phi=\alpha \psi+(1-\alpha)\psi'$ for some $\psi, \psi'\in S$ and $\alpha\in (0,1)$,
we would have $\alpha\psi(P_a)+(1-\alpha)\psi'(P_a)=\phi(P_a)=1$ for any $a\in \mathcal P$.
But this implies that $\psi(P_a)=\psi'(P_a)=1$ for all $a\in \mathcal P$, and so
$\psi, \psi\in \bigcap_{a\in \mathcal P}F_a$. However, in such a case,
$\psi=\psi'=\phi$ as $\phi$ was an extreme point of $\bigcap_{a\in \mathcal P}F_a$.
\end{proof}

\begin{theorem}\label{main-theorem} There is a non-diagonalizable pure state in $\B(\ell_2)$.
\end{theorem}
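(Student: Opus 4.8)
The plan is to assemble the pure state directly from the family of projections produced in Theorem \ref{family} together with the abstract extension principle of Lemma \ref{ext-pure}. First I would invoke Theorem \ref{family} to fix a collection $(P_\alpha:\alpha\in\{0,1\}^\N)$ of orthogonal projections in $\B(\ell_2)$ with the stated two properties: (a) every finite subfamily $P_{\alpha_1},\dots,P_{\alpha_n}$ dominates a common nonzero projection $P_{\alpha_1,\dots,\alpha_n}$, and (b) for each orthonormal basis $(e_k)_{k\in\N}$ of $\ell_2$ there is some $\alpha\in\{0,1\}^\N$ with $|\langle P_\alpha(e_k),e_k\rangle|\le 19/20$ for all $k$.

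Next, property (a) is exactly the hypothesis of Lemma \ref{ext-pure} applied with $J=\{0,1\}^\N$ and $P_j=P_\alpha$, so that lemma yields a pure state $\phi$ on $\B(\ell_2)$ with $\phi(P_\alpha)=1$ for every $\alpha\in\{0,1\}^\N$. It then remains to check that this $\phi$ is non-diagonalizable. Suppose toward a contradiction that $\phi(T)=\lim_u\langle T(e_k),e_k\rangle$ for some orthonormal basis $(e_k)_{k\in\N}$ and some ultrafilter $u$ on $\N$. Apply property (b) to this particular basis to get $\alpha\in\{0,1\}^\N$ with $|\langle P_\alpha(e_k),e_k\rangle|\le 19/20$ for all $k$; since $P_\alpha$ is an orthogonal projection, by Lemma \ref{equality} the numbers $\langle P_\alpha(e_k),e_k\rangle=\|P_\alpha(e_k)\|^2$ are real and lie in $[0,19/20]$. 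Hence $\lim_u\langle P_\alpha(e_k),e_k\rangle\le 19/20$ (an ultralimit of a sequence bounded by $19/20$ is bounded by $19/20$), while on the other hand this ultralimit equals $\phi(P_\alpha)=1$, a contradiction. Therefore no such basis and ultrafilter exist, and $\phi$ is not diagonalizable, proving the theorem.

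I do not expect a genuine obstacle at this stage: all the work has been front-loaded into Theorem \ref{family} (the delicate geometric estimates via Lemmas \ref{net}, \ref{product-distance}, \ref{inclined-vector}, \ref{inclined-subspace}, and \ref{main-lemma}, plus the enumeration of all orthonormal bases using $(2^\omega)^\omega=2^\omega$) and into the abstract Lemma \ref{ext-pure} (Krein--Milman applied to the weak$^*$-compact intersection of the convex sets $F_a$). The only point requiring a word of care is the passage from ``$\langle P_\alpha(e_k),e_k\rangle\le 19/20$ for all $k$'' to ``$\phi(P_\alpha)\le 19/20$'': this uses that $\phi$ restricted to the atomic masa $D((e_k)_{k\in\N})$ is the ultralimit functional, so that for the diagonal operator $P'$ obtained from $P_\alpha$ by deleting off-diagonal entries we have $\phi(P')\le 19/20$; but here one should instead argue directly that $\phi(P_\alpha)=\lim_u\langle P_\alpha(e_k),e_k\rangle$ because $P_\alpha\in\B(\ell_2)$ and the hypothesized form of $\phi$ applies to \emph{all} operators, not just diagonal ones, which is precisely what formula (D) asserts after Anderson's theorem. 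With that observation the contradiction is immediate and the proof is complete.
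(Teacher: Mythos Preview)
Your proposal is correct and follows essentially the same route as the paper: invoke Theorem \ref{family}, apply Lemma \ref{ext-pure} to obtain a pure state $\phi$ with $\phi(P_\alpha)=1$ for all $\alpha$, and then for any candidate basis pick the $\alpha$ given by Theorem \ref{family} to see $\lim_u\langle P_\alpha(e_k),e_k\rangle\le 19/20<1=\phi(P_\alpha)$. Your extra remarks (using Lemma \ref{equality} to see the diagonal entries are nonnegative reals, and noting that the assumed form (D) applies to all of $\B(\ell_2)$ so no detour through the diagonal part $P'$ is needed) are fine clarifications but do not change the argument.
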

\begin{proof}
Let $(P_\alpha: \alpha\in \{0,1\}^\N)$ be the collection of orthogonal
projections from Theorem \ref{family}.
By Lemma \ref{ext-pure} there is a pure state $\phi$ on $\B(\ell_2)$ such
that $\phi(P_\alpha)=1$ for each $\alpha\in \{0,1\}^\N$. 

However, by Theorem
\ref{family} for every orthonormal basis $(e_k)_{k\in \N}$ of $\ell_2$ there 
is $\alpha\in \{0,1\}^\N$ such that 
$$|\lim_{u}\langle P_\alpha(e_k), e_k\rangle|\leq 19/20\not=1=\phi(P_\alpha)$$
which shows that $\phi$ is not diagonalizable.
\end{proof}

\section{Remarks.}

\subsection{} For any nonprincipal ultrafilter
$u$ on $\N$ one can construct a pure state $\phi$ as in Theorem \ref{main-theorem}
which additionally satisfies $\phi(\bigoplus_{m\in X}Q_m)=1$ for all $X\in u$. This is because
the projections $\bigoplus_{m\in X}Q_m$ can be added to the family of projections
from Theorem \ref{family} maintaining the hypothesis of  Lemma \ref{ext-pure}. It follows that such states can
be multiplicative on a big abelian subalgebras of $\B(\ell_2)$ 
of the form $\mathscr A[K]$ of Section 12.5 of \cite{farah-book}.
Here $\mathscr{A}[K]$ is the von Neumann subalgebra of $\B(\ell_2)$
generated by a pairwise orthogonal collection of finite dimensional 
orthogonal projections in $\ell_2$ whose
supremum is the identity.

I. Farah and N. Weaver  
showed that under  an additional set-theoretic hypothesis 
$ \mathfrak d\leq \mathfrak p^*$ (12.5.10 of \cite{farah-book})
there is a pure state whose restriction to any algebra $\mathscr{A}[K]$
is not multiplicative. I. Farah conjectures (p. 336 of \cite{farah-book})
that it is consistent that any pure state has a multiplicative restriction
to a subalgebra of the form $\mathscr{A}[K]$. So our pure state  is compatible with
 this conjecture.

\subsection{} One can see that the commutators $[P_{\alpha_1}, P_{\alpha_2}]$
of projections from Theorem \ref{family}
for distinct $\alpha_1, \alpha_2\in \{0,1\}^\N$ are finite dimensional, namely they belong to
$\bigoplus_{0<j<m} \B_j$, where $m\in \N$ is minimal such that $\alpha_1|m\not=\alpha_2|m$.
It follows that the image 
under the quotient map in the Calkin algebra of $\{P_\alpha: \alpha\in \{0,1\}^\N\}$
is commutative. 
So it is another example of an uncountable collection of commuting projections
in the Calkin algebra which does not lift to
a simultaneously diagonalizable collection
of projections in $\B(\ell_2)$. Such first examples were constructed by J. Anderson in \cite{pathology}
assuming {\sf CH}. Other constructions that do not require additional
set-theoretic assumptions are based on different combinatorial
arguments than ours: Akemann and Weaver's construction is based on the counting argument (\cite{ch-masas}) 
and Farah's construction is based  on a combinatorial argument due to Luzin (Theorem 14.3.2 of \cite{farah-book}).

\subsection{} A somewhat similar use of finite (but two-fold) tensor products to construct a non-separable
object in $\B(\ell_2)$ was employed in \cite{thin-tall}.

\bibliographystyle{amsplain}

\end{document}